\newtheorem{theorem}{Theorem}[section]
\newtheorem*{theoremd*}{Definition/Theorem}
\newtheorem*{theorem1*}{Theorem 1}
\newtheorem*{theorem*}{Theorem 1}
\newtheorem*{problem*}{Problem}
\newtheorem*{question*}{Question}
\newtheorem{definition}{Definition}[section]
\newtheorem*{remarks*}{Remarks}
\newtheorem*{claim*}{Claim}
\newtheorem*{remark*}{Remark}
\newtheorem{remark}{Remark}[section]
\newtheorem*{hLt*}{Hard Lefschetz Theorem}
\newtheorem*{HRR*}{Hodge-Riemann Bilinear Relations}
\newtheorem*{basisthm*}{Basis Theorem}
\newtheorem*{relbasisthm*}{Relative Basis Theorem}
\newtheorem*{primdecomp*}{Primitive Decomposition Theorem}
\newtheorem*{PD*}{Poincar\'e Duality}
\newtheorem*{JBC*}{Watanabe's Bold Conjecture}
\newtheorem{proposition}{Proposition}[section]
\newtheorem{lemma}{Lemma}[section]
\newtheorem{corollary}{Corollary}[section]
\newtheorem*{corollary1*}{Corollary 1}
\newtheorem*{corollary*}{Corollary}
\newcommand{\Z}{{\mathbb Z}}
\newcommand{\N}{{\mathbb N}}
\newcommand{\F}{{\mathbb F}}
\numberwithin{equation}{section}
\begin{document}

\title[Bold Conjecture]{Some Remarks On Watanabe's Bold Conjecture}
\author{Chris McDaniel}
\address{Dept. of Math. and Comp. Sci.\\
Endicott College\\
Beverly, MA 01915}
\email{cmcdanie@endicott.edu}






\begin{abstract} 
At the 2015 Workshop on Lefschetz Properties of Artinian Algebras, Junzo Watanabe conjectured that every graded Artinian complete intersection algebra with the standard grading can be embedded into another such algebra cut out by quadratic generators.  We verify this conjecture in the case where the defining polynomials split into linear factors. 
\end{abstract}
\maketitle



\section{Introduction}
Let $\F$ be any field, and let $R=\F[x_1,\ldots,x_n]$ be the polynomial ring in $n$ variables, endowed with the standard grading, i.e. $\deg(x_i)=1$ for each $i$.  A regular sequence is a sequence of homogeneous polynomials $f_1,\ldots,f_n\in R$ such that $f_1\neq 0$ and for each $2\leq i\leq n$, $f_i$ is a non-zero divisor in the quotient $R\left/\langle f_1,\ldots,f_{i-1}\rangle\right.$.  The quotient ring $R\left/\langle f_1,\ldots,f_n\rangle\right.$ is known as a graded Artinian complete intersection algebra with the standard grading, but in this paper we shall just use the term \emph{complete intersection}.  A complete intersection in which the generators $f_1,\ldots,f_n$ are all quadratic forms is called a \emph{quadratic complete intersection}.  Recall that a complete intersection is always Gorenstein, hence its socle is a one dimensional graded subspace.  A graded $\F$ algebra homomorphism between two complete intersection algebras is called an \emph{embedding} if it maps a socle generator onto a socle generator.  Note that a complete intersection can embed into another complete intersection only if the two have the same socle degree.

In their recent paper \cite{HWW}, Harima, Wachi, and Watanabe have shown that in a ``generic'' quadratic complete intersection equipped with an appropriate action of a symmetric group, that the invariant subring under a Young subgroup is again a complete intersection algebra with the standard grading.  At the 2015 Workshop on Lefschetz Properties of Artinian Algebras at University of G\"ottingen, Junzo Watanabe asked to what extent complete intersections arise in this way.  Specifically he posed the following ``rather bold conjecture'' which we shall heretofore refer to as Watanabe's bold conjecture:
\begin{JBC*}
Every complete intersection algebra embeds into some quadratic complete intersection algebra.
\end{JBC*}
In this paper we prove that this conjecture holds for complete intersections cut out by polynomials that split into linear factors.  To wit
\begin{theorem}
\label{thm:SplitJBC}
Suppose that $f_1,\ldots,f_n$ form a regular sequence in $R$, and let $I=\langle f_1,\ldots,f_n\rangle$ be the ideal they generate.  Further suppose that for each $1\leq i\leq n$ the polynomial $f_i$ splits into a product of linear factors, i.e. $f_i=\ell_{i,1},\ldots,\ell_{i,N_i}$ for some linear forms $\ell_{i,j}\in V^*$.  Then the complete intersection $R/I$ satisfies Watanabe's bold conjecture.
\end{theorem}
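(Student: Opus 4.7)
The strategy is to construct, from the data of the linear factors, a quadratic complete intersection $B$ and a graded $\mathbb{F}$-algebra map $\phi : R \to B$ that descends to an embedding $\bar\phi : R/I \hookrightarrow B$.

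Since the socle degree of $R/I$ is $d = \sum_{i=1}^{n}(N_i - 1)$, the target must be a quadratic complete intersection in $d$ indeterminates. The first candidate to try is the ``thin'' tensor product
\[
B_0 \;=\; \bigotimes_{i=1}^{n}\;\mathbb{F}[y_{i,1},\ldots,y_{i,N_i-1}]\,\big/\,\langle y_{i,j}^2 : 1\le j\le N_i - 1\rangle,
\]
a quadratic complete intersection of socle degree $d$ with socle spanned by $\prod_{i,j}y_{i,j}$. One would specify linear images $\phi(x_k) = u_k \in (B_0)_1$ so that, for each $i$, every image $\phi(\ell_{i,j})$ lies in the $i$-th tensor factor. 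Under such a choice $\phi(f_i) = \prod_{j=1}^{N_i}\phi(\ell_{i,j})$ is a product of $N_i$ linear forms inside a subalgebra of socle degree $N_i - 1$, hence automatically zero, so $\phi$ descends to $\bar\phi : R/I \to B_0$. The injectivity of $\bar\phi$ reduces, via the Gorenstein property of both $R/I$ and $B_0$, to non-vanishing on a socle generator. A preliminary combinatorial lemma extracted from the regular-sequence hypothesis (a Hall-type matching applied to the multiset $\{\ell_{i,j}\}$) allows one to relabel factors so that $\{\ell_{1,N_1},\ldots,\ell_{n,N_n}\}$ is a basis of $V^*$ and $\sigma := \prod_{i}\prod_{j<N_i}\ell_{i,j}$ is a socle generator of $R/I$; one then checks that $\bar\phi(\sigma)$ is a nonzero multiple of $\prod_{i,j}y_{i,j}$.

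The main obstacle is that the thin target $B_0$ does not always admit the required linear images: already in the case $n = 2$, $N_1 = N_2 = 2$ with $f_1 = x_1 x_2$ and $f_2 = (x_1 + x_2)(x_1 - x_2)$, a map $R/I \hookrightarrow B_0$ exists only after adjoining $\sqrt{-1}$ to $\mathbb{F}$. I therefore expect the actual proof to allow $B$ to be a more general quadratic complete intersection whose defining quadrics are themselves adapted to the linear forms $\ell_{i,j}$, and to construct $\phi$ inductively (on either $n$ or on $\sum_i N_i$) by peeling off one linear factor at a time and tensoring with an appropriate one-variable quadratic complete intersection. The technical heart of the proof will then be writing down the adapted quadrics defining $B$ explicitly in terms of the coefficients of the $\ell_{i,j}$, and checking that each inductive step both preserves the complete-intersection property of $B$ and produces a map nonzero on the socle; the regular-sequence hypothesis is what propagates through the induction and underwrites both verifications.
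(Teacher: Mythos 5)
Your proposal correctly pins down the required dimension count (the target must be a quadratic complete intersection in $\sum_i(\deg f_i - 1)$ variables so that the socle degrees match), and the counterexample you give against the naive ``thin'' tensor target
\[
B_0=\bigotimes_i \F[y_{i,1},\ldots,y_{i,N_i-1}]/\langle y_{i,j}^2\rangle
\]
is genuinely illuminating: for $f_1=x_1x_2$, $f_2=x_1^2-x_2^2$ a socle-preserving map into $B_0$ forces $-1$ to be a square, so $B_0$ cannot serve over $\Q$ or $\R$. That observation shows real understanding of the obstruction. However, after identifying the obstruction, the proposal stops being a proof and becomes a forecast: you say you ``expect'' the target to have quadrics adapted to the $\ell_{i,j}$ and the map to be built inductively, but you do not construct the target, do not define the map, and do not verify either that the target is a complete intersection or that the map hits the socle. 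That is the entire content of the theorem, so the gap is essential.

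For comparison, the paper's route is explicit rather than inductive. It first proves a normal-form statement (after a change of coordinates one may write $f_i=x_i\prod_k(x_i-\sum_{j\neq i}\lambda^k_{i,j}x_j)$), using repeatedly that any choice of one linear factor from each $f_i$ is a regular sequence and hence a basis --- no Hall-type matching is needed, this is automatic from Lemma \ref{lem:RegFact} and Lemma \ref{lem:RegPerm}. It then writes down, in one shot, linear forms $L_{i,j}$ in new variables $Z_{i,j}$ whose coefficients are built directly from the $\lambda^k_{i,j}$, sets $\hat A=\F[Z_{i,j}]/\langle Z_{i,j}L_{i,j}\rangle$, and defines $\phi(x_i)=Z_{i,N_i}$. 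The three verifications you would need are then: (a) $\phi$ kills each $f_i$ (a telescoping computation with the $L_{i,j}$); (b) $\hat A$ is Artinian, proved by exhibiting a monomial order in which every relation $Z_{i,j}^2\equiv\cdots$ rewrites a square as something strictly smaller, so finiteness reduces to nilpotence of the $Z_{i,N_i}=\phi(x_i)$, which follows from $A$ being Artinian; and (c) $\phi$ hits the socle, using Abedelfatah's lemma that for a regular sequence of the form $Z_{i,j}L_{i,j}$ the socle is generated by $\prod_{i,j}L_{i,j}$, together with a direct computation showing $\phi\bigl(\prod_i\prod_k(x_i-\sum_{j\neq i}\lambda^k_{i,j}x_j)\bigr)\equiv\pm\prod_{i,j}L_{i,j}$. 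None of these steps appear in your proposal, and an inductive ``peel off one factor'' scheme would still have to produce the same kind of explicit quadrics at the end; so the inductive framing, while not implausible, is not obviously easier and is in any case not carried out. To turn your proposal into a proof you would need to actually write down the quadrics and the map and perform verifications (a)--(c).
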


To prove Theorem \ref{thm:SplitJBC} we first show that a regular sequence as in Theorem \ref{thm:SplitJBC} has a ``normal form''.  Then we find a quadratic complete intersection into which our given complete intersection embeds, in its normal form.

\section{Preliminaries}
\subsection{Regular Sequences}
Let $R=\F[x_1,\ldots,x_n]$ be the polynomial ring in $n$ variables with the standard grading.
\begin{definition}
\label{def:RS}
A sequence of homogeneous polynomials of positive degree $f_1,\ldots,f_k\in R$ is called a \emph{partial regular sequence} if $f_1\neq 0$ and for each $2\leq i\leq k$, $f_i$ is not a zero divisor in the quotient ring $R\left/\langle f_1,\ldots,f_{i-1}\rangle\right.$.  A \emph{regular sequence} is a partial regular sequence of length $n$.
\end{definition}
\begin{remark}
What we refer to as a partial regular sequence here is usually referred to as just a regular sequence.  We use the adjective ``partial'' here since we want all of our regular sequences to have length $n$.
\end{remark}

\begin{lemma}
\label{lem:RegPerm}
If $f_1,\ldots,f_k$ is a partial regular sequence, then $f_{\sigma(1)},\ldots,f_{\sigma(k)}$ is also a partial regular sequence for any permutation $\sigma\in\mathfrak{S}_k$.
\end{lemma}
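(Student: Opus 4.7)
The plan is to reduce the lemma to the case of a single adjacent transposition, and then to reduce that case to a statement about regular sequences of length two in a graded quotient ring.

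Since every permutation in $\mathfrak{S}_k$ is a product of adjacent transpositions $\sigma_i=(i,i+1)$, it suffices to show that whenever $f_1,\ldots,f_k$ is a partial regular sequence, so is the sequence obtained by swapping $f_i$ and $f_{i+1}$. Passing to the graded quotient $\bar R=R\big/\langle f_1,\ldots,f_{i-1}\rangle$, the elements $f_{i+2},\ldots,f_k$ form a partial regular sequence in $\bar R\big/\langle \bar f_i,\bar f_{i+1}\rangle$, which is identical to $\bar R\big/\langle \bar f_{i+1},\bar f_i\rangle$, so their status is unaffected by the swap. Thus the entire lemma collapses to the following local claim: if $f,g$ are homogeneous elements of positive degree in a standard graded Noetherian $\F$-algebra $S$ such that $f$ is a non-zero-divisor in $S$ and $g$ is a non-zero-divisor in $S/(f)$, then $g$ is a non-zero-divisor in $S$ and $f$ is a non-zero-divisor in $S/(g)$.

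For the second assertion, suppose $fz=gw$ in $S$. Reducing mod $f$ gives $\bar g\bar w=0$ in $S/(f)$, so $w=fw_1$ by hypothesis; substituting and cancelling the non-zero-divisor $f$ yields $z=gw_1\in(g)$, as required. For the first assertion, suppose $gz=0$. Reducing mod $f$ gives $\bar g\bar z=0$ in $S/(f)$, so $z=fz_1$. Substituting and cancelling $f$ gives $gz_1=0$, and iterating produces $z\in\bigcap_{n\ge 0}(f^n)$.

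The main obstacle is closing the argument at this last step, and this is where the graded hypothesis does all the work. Because $f$ is homogeneous of positive degree, any element of $(f^n)$ is a sum of homogeneous pieces of degree at least $n\cdot\deg(f)$. Writing $z$ as a finite sum of homogeneous components and choosing $n$ so large that $n\cdot\deg(f)$ exceeds the top degree appearing in $z$, we conclude $z=0$. This proves $g$ is a non-zero-divisor in $S$ and completes the induction on adjacent transpositions, yielding the lemma for arbitrary $\sigma\in\mathfrak{S}_k$.
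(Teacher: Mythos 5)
Your proof is correct, and it differs from the paper only in that the paper simply cites Matsumura (Corollary on p.~127), whereas you reconstruct the standard argument from scratch. The reduction to adjacent transpositions, the observation that the tail $f_{i+2},\ldots,f_k$ is unaffected because $\langle f_1,\ldots,f_{i+1}\rangle$ does not depend on the order of its generators, and the two-element local claim are all exactly the textbook route. The one place where a genuine idea is needed is exactly where you flag it: showing $g$ is a non-zero-divisor in $S$ forces $z\in\bigcap_n(f^n)$, and you must kill this intersection. In a Noetherian local ring one invokes the Krull intersection theorem; you instead use the positive-grading hypothesis, which makes the vanishing of $\bigcap_n(f^n)$ completely elementary since $(f^n)$ lives in degrees $\geq n\deg f$ and any element of $S$ has bounded degree. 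This is the cleanest possible argument in the graded setting and is almost certainly what underlies the Matsumura reference anyway, so you are not taking a fundamentally different route so much as unpacking the black box the paper leaves closed. One minor stylistic point: you only need $n\deg f$ to exceed the \emph{minimum} degree occurring in $z$, not the maximum, though of course exceeding the maximum also works. Your implicit use of the convention that $0$ is a zero-divisor in a nonzero ring (so that ``$g$ is a non-zero-divisor in $S$'' automatically gives $g\neq 0$, handling the $f_1\neq 0$ requirement after a swap of the first two entries) is standard and fine, but it would be worth a half-sentence to note it matches the paper's Definition~\ref{def:RS}.
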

\begin{proof}
See Matsumura \cite{Mat}[Corollary pg. 127].
\end{proof}

\begin{lemma}
\label{lem:RegArt}
A sequence of homogeneous polynomials of positive degree $f_1,\ldots,f_n$ is a regular sequence if and only if the quotient $R\left/\langle f_1,\ldots,f_n\rangle\right.$ is a finite dimensional vector space over $\F$.
\end{lemma}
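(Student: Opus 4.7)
The plan is to establish both implications using the interplay between Krull dimension, the graded Nakayama setup, and the Cohen--Macaulay property of the polynomial ring $R$. Throughout, let $\mathfrak{m}=\langle x_1,\dots,x_n\rangle$ denote the irrelevant ideal; since each $f_i$ is homogeneous of positive degree, $I=\langle f_1,\dots,f_n\rangle\subseteq\mathfrak{m}$, and $R/I$ is a finitely generated graded $\F$-algebra. The key general fact I would invoke is that, for such an algebra, $\dim_\F(R/I)<\infty$ is equivalent to $R/I$ having Krull dimension zero, which in turn is equivalent to $\sqrt{I}=\mathfrak{m}$.

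For the forward direction, I would argue by induction on $k=1,\dots,n$, showing that $\dim R/\langle f_1,\dots,f_k\rangle=n-k$. The base case uses that $f_1\ne 0$ is a non-zero divisor in the domain $R$, so it drops Krull dimension by exactly $1$. For the inductive step, having a homogeneous non-zero divisor $f_{k+1}$ in the graded quotient $R/\langle f_1,\dots,f_k\rangle$ again cuts the Krull dimension by one, by the standard dimension formula in commutative algebra. Applied $n$ times this gives $\dim R/I=0$, and hence $R/I$ is a finite-dimensional graded $\F$-algebra.

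For the backward direction, I would use that $R$ is a regular, hence Cohen--Macaulay, local (graded) ring of dimension $n$. If $R/I$ is finite-dimensional, then $\dim R/I=0$, so the sequence $f_1,\dots,f_n$ is a homogeneous system of parameters for $R$ (their only common zero is the origin). In a Cohen--Macaulay ring, every system of parameters is a regular sequence; this is precisely where the CM hypothesis on the ambient ring $R$ is used. Concretely, one can also prove this directly by induction: choosing any ordering, one verifies via the graded Auslander--Buchsbaum-type argument (or by a direct Hilbert series count comparing $R/\langle f_1,\dots,f_{i-1}\rangle$ before and after multiplication by $f_i$) that $f_i$ cannot be a zero divisor modulo the previous ones, since otherwise the quotient would have Krull dimension strictly greater than $n-i$, contradicting that the full quotient is $0$-dimensional.

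The main obstacle is the backward direction, which is precisely where the Cohen--Macaulayness of $R$ enters and is not purely formal. In the write-up I would likely cite Matsumura (as in Lemma~\ref{lem:RegPerm}) for both directions, specifically the characterization of regular sequences via codimension in a Cohen--Macaulay ring, since the graded polynomial-ring setting is the textbook case. The only care needed is to confirm that the homogeneous hypothesis plus positive degrees lets us pass freely between ``finite $\F$-dimensional'', ``Krull dimension $0$'', and ``$\sqrt{I}=\mathfrak{m}$'', which follows from graded Nakayama and the Nullstellensatz.
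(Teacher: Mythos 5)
Your proposal is correct and matches the paper's approach: the paper simply cites Theorems 14.1 and 17.4 of Matsumura, which are exactly the dimension-drop characterization of regular sequences and the Cohen--Macaulay fact that systems of parameters are regular sequences that you invoke. You have just unpacked what the cited theorems say in the graded setting.
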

\begin{proof}
This follows from the graded analogues of Theorems 14.1 and 17.4 in Matsumura \cite{Mat}.
\end{proof}

\begin{lemma}
\label{lem:RegFact}
Suppose that $f_1,\ldots,f_k$ is a partial regular sequence, and that $f_k=g\cdot h$ for some homogeneous positive degree polynomials $g$ and $h$.  Then $f_1,\ldots,f_{k-1},g$ is also a partial regular sequence.
\end{lemma}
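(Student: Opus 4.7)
The plan is entirely straightforward: by Definition \ref{def:RS}, since $f_1,\ldots,f_k$ is a partial regular sequence, the truncated sequence $f_1,\ldots,f_{k-1}$ is automatically a partial regular sequence (this holds vacuously when $k=1$). Thus the only thing to verify is that $g$ is a non-zero divisor in the quotient $\overline{R} := R\left/\langle f_1,\ldots,f_{k-1}\rangle\right.$, together with $g \neq 0$.

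First, I would dispose of the non-vanishing: since $g$ is assumed to be a homogeneous polynomial of positive degree, we have $g \neq 0$ in $R$ and $g$ is a non-unit, so the question of being a zero divisor in $\overline{R}$ is nontrivial but well-posed.

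For the main step, I would argue by a ``multiply by $h$'' trick. Suppose $p \in R$ satisfies $g \cdot p \in \langle f_1,\ldots,f_{k-1}\rangle$. Multiplying by $h$ yields
\[
f_k \cdot p \;=\; g \cdot h \cdot p \;=\; h \cdot (g \cdot p) \;\in\; \langle f_1,\ldots,f_{k-1}\rangle,
\]
i.e.\ $f_k \cdot \bar p = 0$ in $\overline{R}$, where $\bar p$ denotes the image of $p$. But by hypothesis $f_k$ is a non-zero divisor in $\overline{R}$, so $\bar p = 0$, that is, $p \in \langle f_1,\ldots,f_{k-1}\rangle$. This shows $g$ is a non-zero divisor modulo $\langle f_1,\ldots,f_{k-1}\rangle$, completing the verification that $f_1,\ldots,f_{k-1},g$ satisfies Definition \ref{def:RS}.

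There is no real obstacle here; the only thing to be slightly careful about is the edge case $k=1$, where the condition ``$f_1 \neq 0$'' in Definition \ref{def:RS} must be checked for $g$ in place of $f_1$, and this is immediate from $\deg g > 0$. The argument does not even require any use of the graded/standard-graded hypothesis beyond ensuring all relevant elements are nonzero of positive degree.
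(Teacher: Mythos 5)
Your proof is correct and takes essentially the same approach as the paper: the paper argues contrapositively that if $g$ were a zero divisor in $R/\langle f_1,\ldots,f_{k-1}\rangle$ then $f_k = g\cdot h$ would be one too, and your ``multiply by $h$'' computation is exactly the step the paper leaves implicit, written out directly instead of by contradiction.
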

\begin{proof}
With $f_1,\ldots,f_k$, $g$, and $h$ as above, it suffices to show that $g$ is not a zero divisor in the quotient ring $R\left/\langle f_1,\ldots,f_{k-1}\rangle\right.$.  But of course if $g$ cannot be a zero divisor, since otherwise $f_k$ would be one, contradicting the regularity assumption on $f_1,\ldots,f_k$.
\end{proof}

\begin{corollary}
\label{cor:RegFact}
If $f_1,\ldots,f_k$ is a partial regular sequence, and $f_i$ splits into a product of linear factors, i.e. $f_i=\ell_{i,0}\cdots \ell_{i,N_i}$ for each $1\leq i\leq k$, then $\ell_{1,j_1},\ldots,\ell_{k,j_k}$ is also a partial regular sequence for each $1\leq j_i\leq N_i$ $1\leq i\leq k$.
\end{corollary}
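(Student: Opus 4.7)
The plan is to deduce this corollary from the two preceding lemmas by a straightforward double induction/iteration: Lemma \ref{lem:RegPerm} lets us freely permute the entries of a partial regular sequence, and Lemma \ref{lem:RegFact} lets us replace the \emph{last} entry by any one of its homogeneous factors. Combining these two moves, we can replace any single entry $f_i$ of the sequence by any one of its factors, and in particular by any one of its linear factors $\ell_{i,j_i}$.

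More precisely, I would argue by induction on the number of entries that have already been replaced by linear factors. The base case is the given sequence $f_1,\ldots,f_k$ itself. For the inductive step, suppose we have produced a partial regular sequence of the form
\[
\ell_{1,j_1},\ldots,\ell_{i-1,j_{i-1}},f_i,f_{i+1},\ldots,f_k.
\]
Applying Lemma \ref{lem:RegPerm} with the cyclic permutation that moves $f_i$ to the last slot, we obtain that
\[
\ell_{1,j_1},\ldots,\ell_{i-1,j_{i-1}},f_{i+1},\ldots,f_k,f_i
\]
is a partial regular sequence. Since $f_i=\ell_{i,j_i}\cdot\bigl(\prod_{j\neq j_i}\ell_{i,j}\bigr)$ is a factorization into homogeneous positive degree polynomials, Lemma \ref{lem:RegFact} applied to the last entry yields that
\[
\ell_{1,j_1},\ldots,\ell_{i-1,j_{i-1}},f_{i+1},\ldots,f_k,\ell_{i,j_i}
\]
is a partial regular sequence. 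Permuting once more via Lemma \ref{lem:RegPerm} to move $\ell_{i,j_i}$ back into position $i$ produces
\[
\ell_{1,j_1},\ldots,\ell_{i-1,j_{i-1}},\ell_{i,j_i},f_{i+1},\ldots,f_k,
\]
completing the inductive step. After $k$ iterations, the resulting sequence $\ell_{1,j_1},\ldots,\ell_{k,j_k}$ is a partial regular sequence, which is exactly the claim.

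There is no real obstacle beyond keeping track of the permutations at each step; everything follows mechanically from the two preceding lemmas, with the only mild subtlety being to ensure that the complementary factor $\prod_{j\neq j_i}\ell_{i,j}$ has positive degree so that Lemma \ref{lem:RegFact} applies (which holds whenever $N_i\geq 1$, i.e., $f_i$ is not itself linear; if $f_i$ is already linear then $\ell_{i,j_i}=f_i$ and nothing needs to be done).
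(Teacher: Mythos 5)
Your proof is correct and takes essentially the same approach as the paper: both arguments iteratively combine Lemma \ref{lem:RegPerm} (to move a chosen entry to the last slot) with Lemma \ref{lem:RegFact} (to replace that last entry by one of its linear factors), repeating until every $f_i$ has been replaced by $\ell_{i,j_i}$. The paper phrases the iteration starting from $f_k$ and cycling the replaced factor to the front rather than setting up a formal induction from the front as you do, but the underlying mechanism is identical.
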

\begin{proof}
Start with $f_k=\ell_{k,0}\cdots\ell_{k,N_k}$.  By Lemma \ref{lem:RegFact}, we know that $f_1,\ldots,f_{k-1},\ell_{k,j_k}$ is a partial regular sequence for each $1\leq j_k\leq N_k$.  By Lemma \ref{lem:RegPerm}, we know that $\ell_{k,j_k},f_1,\ldots,f_{k-1}$ is also a partial regular sequence for each $1\leq j_k\leq N_k$.  Now repeat.
\end{proof}

The following proposition gives us a normal form for a split regular sequence as described in the statement of Theorem \ref{thm:SplitJBC}.
\begin{proposition}
\label{prop:SplitNorm}
Let $f_1,\ldots,f_n$ be a regular sequence such that each $f_i$ splits as a product of linear forms, and let $I$ be the ideal generated by them.  Then after a linear change of coordinates, we can write 
$$f_i=x_i\prod_{j=0}^{N_i-1}(x_i-\sum_{j\neq i}\lambda_{i,j}x_j), \ \ \ \text{for some $\lambda_{i,j}\in\F$}.$$
\end{proposition}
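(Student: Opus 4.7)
The plan is to invoke Corollary \ref{cor:RegFact} and Lemma \ref{lem:RegArt} in tandem: first to produce a linear change of coordinates in which each $x_i$ divides $f_i$, and then to constrain the remaining linear factors of $f_i$ so that each of them involves $x_i$ with a nonzero coefficient. A harmless rescaling at the end delivers the claimed normal form.

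To carry out the first step, I would pick, for each $i$, a single linear factor $\ell_i$ of $f_i$. By Corollary \ref{cor:RegFact} the sequence $\ell_1,\ldots,\ell_n$ is itself a regular sequence, hence by Lemma \ref{lem:RegArt} the quotient $R/\langle \ell_1,\ldots,\ell_n\rangle$ is finite-dimensional. Since an ideal generated by linear forms has a finite-dimensional quotient exactly when those forms span all of $R_1$, the $\ell_i$ must form a basis of $R_1$. Performing the linear change of coordinates sending $\ell_i\mapsto x_i$ gives $x_i\mid f_i$, so we may write
$$f_i = c_i\cdot x_i\cdot \prod_j m_{i,j},\qquad m_{i,j}=a_{i,j}\,x_i+\sum_{k\neq i}b_{i,j,k}\,x_k,$$
with $c_i\in\F^\times$ and $a_{i,j},b_{i,j,k}\in\F$.

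The main step — and the one I expect to be the principal obstacle — is to show that $a_{i,j}\neq 0$ for every pair $(i,j)$. I would argue by contradiction via a factor swap: if $a_{i_0,j_0}=0$ for some pair, then $m_{i_0,j_0}$ lies in the span of $\{x_k:k\neq i_0\}$, and applying Corollary \ref{cor:RegFact} with the factor choice $\ell_i=x_i$ for $i\neq i_0$ and $\ell_{i_0}=m_{i_0,j_0}$ produces a regular sequence whose generated ideal is contained in $\langle x_k:k\neq i_0\rangle$. The quotient of $R$ by the latter ideal is isomorphic to $\F[x_{i_0}]$, which is infinite-dimensional, contradicting Lemma \ref{lem:RegArt}. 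Once every $a_{i,j}$ is confirmed nonzero, I would set $\lambda_{i,j,k}:=-b_{i,j,k}/a_{i,j}$, so that $m_{i,j}=a_{i,j}\bigl(x_i-\sum_{k\neq i}\lambda_{i,j,k}\,x_k\bigr)$, and absorb the unit $c_i\prod_j a_{i,j}$ into a rescaling of $f_i$ (which does not change the ideal $I$). Relabeling indices to match the statement of the proposition then yields the asserted normal form.
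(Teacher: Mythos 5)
Your proposal is correct and follows essentially the same route as the paper: choose one linear factor from each $f_i$, invoke Corollary~\ref{cor:RegFact} to see these form a regular sequence of linear forms (hence a basis of $R_1$), change coordinates so that $x_i\mid f_i$, and then apply Corollary~\ref{cor:RegFact} a second time with a ``factor swap'' to force the $x_i$-coefficient in each remaining factor of $f_i$ to be nonzero. The only cosmetic differences are that you derive the nonvanishing of $a_{i,j}$ from Lemma~\ref{lem:RegArt} via infinite-dimensionality, whereas the paper observes directly that the swapped linear form would become a zero divisor (indeed, zero) in $R/\langle x_1,\ldots,\hat{x}_i,\ldots,x_n\rangle$; and you finish by rescaling the generators $f_i$ (cleanly noting this preserves the ideal), whereas the paper invokes a second coordinate change. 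If anything, your rescaling of $f_i$ is the more precise way to finish: a substitution $x_i\mapsto \mu_i x_i$ interacts with all the factors of $f_i$ (and with the other $f_j$'s) rather than simply absorbing the constant, so your phrasing avoids a small imprecision in the paper's last line.
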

\begin{proof}
For each $1\leq i\leq n$ write $f_i=\ell_{i,0}\cdots\ell_{i,N_i}$ for $1\leq i\leq n$.  By Corollary \ref{cor:RegFact}, the linear forms $\ell_{1,N_1},\ldots,\ell_{n,N_n}$ are a regular seqence hence they must be linearly independent, hence $\ell_i\mapsto x_i$ is a linear change of coordinates.  Now we can rewrite $f_i=x_i\cdot\prod_{j=0}^{N_i-1}\hat{\ell}_{i,j}$, where $\hat{\ell}_{i,j}=\sum_{k=1}^na_{i,j}^kx_k$ for some scalars $a_{i,j}^k$.  We would like to know that the coefficient $a_{i,j}^i\neq 0$ for each $0\leq j\leq N_i-1$.  Again by Corollary \ref{cor:RegFact} the sequence $x_1,\ldots,\ell_{i,j},\ldots,x_n$ must be regular for each $0\leq j\leq N_i-1$.  On the other hand, if $a_{i,j}^i=0$ the linear form $\ell_{i,j}$ would be zero in the quotient $R\left/\langle x_1,\ldots,\hat{x_i},\ldots,x_n\rangle\right.$ (where the ``hat'' indicates omission), contradicting regularity.  Therefore $a_{i,j}^i$ must be non-zero.  Finally we can define another change of coordinates $x_i\mapsto\frac{1}{\prod_{j=0}^{N_i-1}a_{i,j}^i}x_i$ to get our desired form.
\end{proof}

We also will make use of the following lemma, lifted from a recent paper of Abedelfatah \cite{Abed}
\begin{lemma}
\label{lem:Abed}
Suppose that $f_1=x_1L_1,\ldots,f_n=x_nL_n$ is a regular sequence in $\F[x_1,\ldots,x_n]$, where $L_i$is a linear form for $1\leq i\leq n$.  Then the quotient 
$$\F[x_1,\ldots,x_n]\left/\langle f_1,\ldots,f_n\rangle\right.$$
is spanned (as an $\F$ vector space) by the equivalence classes of square free monomials from $\F[x_1,\ldots,x_n]$.
\end{lemma}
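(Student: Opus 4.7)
The plan is to show that the $\F$-subspace $S \subseteq A := R/\langle f_1,\dots,f_n\rangle$ spanned by the classes of squarefree monomials is closed under multiplication by every $x_i$; since $S$ contains $1 = [x_\emptyset]$ and $A$ is generated as an $\F$-algebra by $x_1,\dots,x_n$, this forces $S = A$.

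Write $L_i = \sum_{j=1}^n a_{i,j}\, x_j$. The central structural step I would establish first is that \emph{every} principal submatrix $A_I := (a_{i,j})_{i,j \in I}$ of the coefficient matrix, as $I$ ranges over subsets of $\{1,\dots,n\}$, has nonzero determinant. To prove this, I would apply Corollary \ref{cor:RegFact} to the splittings $f_i = x_i \cdot L_i$, selecting the factor $L_i$ from $f_i$ for each $i \in I$ and the factor $x_j$ from $f_j$ for each $j \notin I$. This yields a regular sequence of $n$ linear forms in $R$, which must be $\F$-linearly independent---were they dependent, Lemma \ref{lem:RegArt} would fail, since the corresponding quotient would not be Artinian. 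Expanding the resulting matrix along the standard-basis rows indexed by $j \notin I$ identifies its determinant with $\pm \det A_I$, so $\det A_I \neq 0$.

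With this in hand, the rest is a linear-algebra move. Given a squarefree class $[x_I]$ and an index $i$, the product $x_i \cdot [x_I]$ is itself a squarefree class when $i \notin I$. When $i \in I$, I would multiply each defining relation $x_k L_k \in \langle f_1,\dots,f_n\rangle$, for $k \in I$, by the squarefree monomial $x_{I \setminus \{k\}}$ to obtain the $|I|$ equations
\[
\sum_{j \in I} a_{k,j}\, [x_j x_I] \;=\; -\sum_{j \notin I} a_{k,j}\, [x_{I \cup \{j\}}], \qquad k \in I,
\]
whose left-hand coefficient matrix is precisely $A_I$. Invertibility of $A_I$ lets me solve for each non-squarefree class $[x_j x_I]$ with $j \in I$ as an explicit linear combination of the squarefree classes $[x_{I \cup \{j'\}}]$ with $j' \notin I$. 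In the boundary case $I = \{1,\dots,n\}$ the right-hand side is empty, which forces $[x_j \cdot x_{[n]}] = 0$; this is consistent with these monomials lying above the socle degree of $A$.

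I expect the first step to be the main obstacle: showing $\det A_I \neq 0$ for \emph{every} $I \subseteq \{1,\dots,n\}$ is where the full strength of the regularity hypothesis is marshalled through Corollary \ref{cor:RegFact}, and it is not enough to know merely that the diagonal entries $a_{i,i}$ are nonzero. Once all principal minors are known to be nonzero, the closure of $S$ under multiplication by each generator $x_i$ is immediate, and the proof concludes.
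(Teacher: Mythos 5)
The paper does not prove this lemma itself; its stated proof is a citation to Lemma~3.1 and Lemma~3.2 of Abedelfatah \cite{Abed}, so there is no in-paper argument to compare yours against. That said, your self-contained proof is correct. By Corollary~\ref{cor:RegFact} the forms $\{L_i : i\in I\}\cup\{x_j : j\notin I\}$ form a regular sequence, hence are $\F$-linearly independent by Lemma~\ref{lem:RegArt} (otherwise the quotient could not be Artinian), and Laplace expansion along the standard-basis rows $e_j$, $j\notin I$, shows the determinant of the full coefficient matrix equals $\pm\det A_I$, giving $\det A_I\neq 0$. Multiplying $x_kL_k$ by $x_{I\setminus\{k\}}$ for $k\in I$ then produces a linear system with matrix $A_I$ relating the non-squarefree classes $[x_jx_I]$, $j\in I$, to squarefree classes of the same degree, and invertibility lets you solve; together with the trivial case $i\notin I$, this shows $S$ is closed under multiplication by each $x_i$, hence $S=A$ since $1\in S$ and the $x_i$ generate. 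Your remark that the diagonal condition $a_{i,i}\neq 0$ alone (the $I=\{i\}$ case) is insufficient is exactly the point: the full array of principal minors really is needed, and Corollary~\ref{cor:RegFact} is precisely what delivers it. Whether Abedelfatah's own proof is organized around these minors or proceeds differently (for instance by induction on $n$), your argument is a clean deduction from the two preliminary lemmas the paper already establishes, and it fills in a proof the paper chose to outsource.
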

\begin{proof}
See Lemma 3.1 and Lemma 3.2 in the aforementioned paper \cite{Abed}.
\end{proof}

\begin{corollary}
\label{cor:MEAbed}
With notation as in Lemma \ref{lem:Abed}, a socle generator for the quotient
$$\F[x_1,\ldots,x_n]\left/\langle f_1,\ldots,f_n\rangle\right.$$
is $L_1\cdots L_n$.
\end{corollary}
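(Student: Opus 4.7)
The plan is to verify two things: that $L_1 \cdots L_n$ lies in the socle of $A := R/\langle f_1,\ldots,f_n\rangle$, and that it is nonzero there. Since $A$ is Gorenstein with one-dimensional socle concentrated in the top degree $\sum_i(\deg f_i - 1) = n$, together these identify $L_1 \cdots L_n$ as a socle generator.

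First I would check annihilation. For each index $i$,
\begin{equation*}
x_i \cdot (L_1 \cdots L_n) \;=\; (x_i L_i)\prod_{j\neq i} L_j \;=\; f_i\,\prod_{j\neq i} L_j \;\equiv\; 0 \pmod{\langle f_1,\ldots,f_n\rangle},
\end{equation*}
so $L_1 \cdots L_n$ is killed by every variable, hence by the entire maximal ideal, and therefore lies in the socle of $A$.

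The main obstacle is the non-vanishing. The key idea is to exploit the symmetry between the two linear factors of $f_i = x_i L_i$. Since each $f_i$ already splits into linear factors, Corollary \ref{cor:RegFact} guarantees that $L_1,\ldots,L_n$ is itself a regular sequence; in particular these $n$ linear forms are linearly independent. I would then perform the linear change of coordinates $y_i := L_i$, rewriting $R = \F[y_1,\ldots,y_n]$ and expressing each original $x_i$ as a linear form $M_i(y_1,\ldots,y_n)$. In the new presentation we have $f_i = M_i y_i$, which is again exactly the shape required by Lemma \ref{lem:Abed}.

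Applying Lemma \ref{lem:Abed} in these new coordinates, $A$ is spanned over $\F$ by squarefree monomials in $L_1,\ldots,L_n$. In degree $n$ the only such monomial is $L_1\cdots L_n$, so $A_n$ is spanned by this single element. Because $A_n$ coincides with the one-dimensional socle of the Gorenstein algebra $A$ and is therefore nonzero, we must have $L_1\cdots L_n \neq 0$ in $A$, completing the identification of it as a socle generator. The leverage of applying Lemma \ref{lem:Abed} twice, once in each of the ``dual'' coordinate systems furnished by the factorizations $f_i = x_i L_i$, is what makes the argument go through without any explicit computation of coefficients.
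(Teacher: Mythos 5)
Your proposal is correct and uses the same key idea as the paper's proof: change coordinates via $y_i = L_i$ (justified by Corollary \ref{cor:RegFact} giving linear independence of the $L_i$), note that $f_i = y_i M_i$ again has the form required by Lemma \ref{lem:Abed}, and conclude that $A$ is spanned by squarefree monomials in the $L_i$, with $L_1\cdots L_n$ being the unique such monomial in top degree $n$. Your explicit verification that $x_i\cdot(L_1\cdots L_n) = f_i\prod_{j\neq i}L_j \equiv 0$ is a nice sanity check but is logically redundant once you know $A_n$ is one-dimensional and equals the socle of the Gorenstein algebra $A$.
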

\begin{proof}
By Corollary \ref{cor:RegFact}, the linear forms $L_1,\ldots,L_n$ must form a regular sequence, hence must be linearly independent over $\F$.  Hence the polynomial rings $\F[x_1,\ldots,x_n]$ and $\F[L_1,\ldots,L_n]$ are identical.  Now applying Lemma \ref{lem:Abed} to the regular sequence $f_1=x_1\cdot L_1,\ldots,f_n=x_n\cdot L_n$, regarding $L_1,\ldots,L_n$ as the coordinate functions, we see that $\F[L_1,\ldots,L_n]\left/\langle f_1,\ldots,f_n\rangle\right.$ is spanned by square free monomials in the $L_1,\ldots,L_n$.  In particular, the socle is therefore generated by the unique square free monomial in degree $n$, namely $L_1\cdots L_n$.
\end{proof}

\subsection{Monomial Orderings}
We can endow the set of monomials in the polynomial ring $\F[x_1,\ldots,x_n]$ with a total ordering by declaring that
$$x^{a_1}_1\cdots x^{a_n}_n< x_1^{b_1}\cdots x_n^{b_n}$$
if $a_1+\cdots+a_n<b_1+\cdots+b_n$ or if $a_1+\cdots+a_n=b_1+\cdots+b_n$ and $a_n=b_n,\ldots,a_{n-j+1}=b_{n-j+1}$ and $a_{n-j}< b_{n-j}$ for some $1\leq j\leq n$.  Note that with this order, we have
$$x_1<\cdots<x_n.$$
This is called the \emph{graded lexicographic} or \emph{grlex} monomial ordering.
\begin{proposition}
\label{prop:Grlex}
The grlex ordering has the following properties.
\begin{enumerate}
\item For any monomials $m_1,m_2,m\in\F[x_1,\ldots,x_n]$ we have
$$m_1\leq m_2 \ \ \Leftrightarrow \ \ \ m\cdot m_1\leq m\cdot m_2$$
\item For any fixed monomial $m\in\F[x_1,\ldots,x_n]$ there are only finitely many monomials less than $m$.
\end{enumerate}
\end{proposition}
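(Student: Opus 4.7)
The plan is to verify both items essentially by unpacking the definition of the grlex ordering. Both parts are combinatorial and straightforward; no serious obstacle should appear.

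For item (i), I would write $m_1 = x_1^{a_1}\cdots x_n^{a_n}$, $m_2 = x_1^{b_1}\cdots x_n^{b_n}$, and $m = x_1^{c_1}\cdots x_n^{c_n}$, so that
\[
m\cdot m_1 = x_1^{a_1+c_1}\cdots x_n^{a_n+c_n},\qquad m\cdot m_2 = x_1^{b_1+c_1}\cdots x_n^{b_n+c_n}.
\]
Since $\sum(a_i+c_i) = \sum a_i + \sum c_i$ and similarly for $b$, the comparison of total degrees is preserved by adding $(c_1,\dots,c_n)$ componentwise; and in the equal-degree case, any tie-breaking index $j$ certifying $a_{n-j}<b_{n-j}$ with $a_{n-k}=b_{n-k}$ for $k<j$ will continue to certify the corresponding inequality after shifting each coordinate by $c_{n-k}$. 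This handles the forward direction $m_1 \le m_2 \Rightarrow m\cdot m_1 \le m\cdot m_2$; the reverse direction follows from this forward direction by contrapositive (if $m_1 > m_2$ then $m\cdot m_1 > m\cdot m_2$), using that $\le$ is a total order.

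For item (ii), I would note that the grlex ordering refines the total degree: any monomial $m'$ with $m' \le m$ must satisfy $\deg(m') \le \deg(m)$. Hence the set of monomials less than $m$ is contained in the union over $k = 0,1,\dots,\deg(m)$ of the degree-$k$ monomials in $n$ variables. Each such set has cardinality $\binom{n+k-1}{n-1}$, which is finite, and the union is over finitely many $k$, proving the claim.

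I expect the writeup to take just a short paragraph for each item, with the first being a matter of carefully tracking the two clauses of the definition and the second being an immediate consequence of the fact that grlex is a graded order.
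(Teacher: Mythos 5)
Your proposal is correct and matches the paper's approach: part (ii) is handled identically by observing that grlex refines total degree, and for part (i) you simply spell out the componentwise argument that the paper dismisses as obvious. No substantive difference.
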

\begin{proof}
The proof of (1) is obvious.  To see (2), fix a monomial $m\in\F[x_1,\ldots,x_n]$ and suppose that $m'\leq m$ is any lesser monomial.  Then by definition, $\deg(m')\leq\deg(m)$, hence $m'$ belongs to the finite set 
$\left\{\substack{\text{monomials in $\F[x_1,\ldots,x_n]$}\\ \text{of degree $\leq\deg(m)$}\\}\right\}.$
\end{proof} 

We shall use monomial orderings to argue that certain quotients of polynomial rings are Artinian.

\section{The Main Result}
Fix a field $\F$, fix integers $N_1,\ldots,N_n$, and suppose $f_1,\ldots,f_n$ is a regular sequence in $\F[x_1,\ldots,x_n]$ of degrees $N_1,\ldots,N_n$, respectively, and let $I=\langle f_1,\ldots,f_n\rangle$ be the ideal they generate.  We will further assume that each $f_i$ splits as a product of linear forms, so that we may write the quotient $A\coloneqq \F[x_1,\ldots,x_n]/I$ in its normal form
$$A=\F[x_1,\ldots,x_n]\left/\left\langle \left.x_i\prod_{k=0}^{N_i-1}\left(x_i-\sum_{j\neq i}\lambda_{i,j}^kx_j\right)\right|1\leq i\leq n\right\rangle\right.$$
for some $\lambda_{i,j}^k\in\F$ for $1\leq i\leq n$, $1\leq j\leq N_i$ and $0\leq k\leq N_i-1$, by Proposition \ref{prop:SplitNorm} above.

Now consider the polynomial ring $\F\left[Z_{i,k}\left|1\leq i\leq n, \ 1\leq k\leq N_i\right.\right]$.  For each $1\leq i\leq n$ define the linear form 
$$L_{i,N_i}\coloneqq Z_{i,1}+\cdots+Z_{i,N_i-1}-Z_{i,N_i}+\sum_{j\neq i}\lambda_{i,j}^0Z_{j,N_j}$$
and for $1\leq k\leq N_i-1$ define
$$L_{i,k}\coloneqq Z_{i,1}+\cdots+Z_{i,k}-\left(\sum_{j\neq i}(\lambda_{i,j}^{N_i-k}-\lambda^0_{i,j})Z_{j,N_j}\right).$$

Define the quotient ring 
$$\hat{A}\coloneqq \F\left[Z_{i,j}\left|1\leq i\leq n, \ \ 1\leq j\leq N_i\right.\right]\left/\langle Z_{i,j}\cdot L_{i,j}\left| 1\leq i\leq n, \ \ 1\leq j\leq N_i\right.\rangle\right.$$

\begin{proposition}
\label{prop:WellDef}
There is a well defined $\F$ algebra map $\phi\colon A\rightarrow \hat{A}$ defined by $\phi(x_i)=Z_{i,N_i}$, $1\leq i\leq n$.
\end{proposition}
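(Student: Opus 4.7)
To show $\phi$ is well-defined, it suffices to verify that for each $i$,
$$\phi(f_i) \;=\; Z_{i,N_i}\prod_{k=0}^{N_i-1}\Bigl(Z_{i,N_i}-\sum_{j\neq i}\lambda_{i,j}^k Z_{j,N_j}\Bigr)$$
lies in the ideal $J := \langle Z_{i',j}L_{i',j}\mid 1\le i'\le n,\, 1\le j\le N_{i'}\rangle$ defining $\hat{A}$. My plan is to set $M_{i,k}:=Z_{i,N_i}-\sum_{j\neq i}\lambda_{i,j}^kZ_{j,N_j}$ and rewrite each $M_{i,k}$ in terms of the $L_{i,m}$'s and the ``column'' variables $Z_{i,1},\dots,Z_{i,N_i-1}$, then peel off factors from left to right, using the relations $Z_{i,j}L_{i,j}\equiv 0\pmod{J}$ at each stage.

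First, solving the definition of $L_{i,N_i}$ for $\sum_{j\neq i}\lambda_{i,j}^0 Z_{j,N_j}$ gives
$$M_{i,0}\;=\;(Z_{i,1}+\cdots+Z_{i,N_i-1})-L_{i,N_i}.$$
Next, the defining formula for $L_{i,N_i-k}$ (valid for $1\le k\le N_i-1$) lets one solve for $\sum_{j\neq i}(\lambda_{i,j}^k-\lambda_{i,j}^0)Z_{j,N_j}$, and combining with the expression just found for $\sum_{j\neq i}\lambda_{i,j}^0Z_{j,N_j}$ yields
$$M_{i,k}\;=\;\bigl(Z_{i,N_i-k+1}+\cdots+Z_{i,N_i-1}\bigr)+L_{i,N_i-k}-L_{i,N_i}.$$
These two identities are the whole point of the particular formulas for the $L_{i,j}$'s; once I have them, the rest is a clean induction.

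The key claim to prove by induction on $k$ (for $0\le k\le N_i-1$) is
$$Z_{i,N_i}\prod_{m=0}^{k}M_{i,m}\;\equiv\;Z_{i,N_i}\bigl(Z_{i,1}+\cdots+Z_{i,N_i-k-1}\bigr)\,L_{i,N_i-k}\,L_{i,N_i-k+1}\cdots L_{i,N_i-1}\pmod{J},$$
where for $k=0$ the $L$-product is empty and for $k=N_i-1$ the $Z$-sum is empty. The base case $k=0$ is immediate from the formula for $M_{i,0}$ together with $Z_{i,N_i}L_{i,N_i}\in J$. For the inductive step, multiplying the right-hand side by $M_{i,k+1}$ produces three types of terms: the $-L_{i,N_i}$ term vanishes because of the factor $Z_{i,N_i}L_{i,N_i}$; each summand of $(Z_{i,N_i-k}+\cdots+Z_{i,N_i-1})$ vanishes because for each such index $p$ the factor $L_{i,p}$ already appears in the product (here I use that $L_{i,N_i-k},\dots,L_{i,N_i-1}$ are all present); and the $L_{i,N_i-k-1}$ term, after distributing the $Z$-sum, loses its $Z_{i,N_i-k-1}$ summand thanks to $Z_{i,N_i-k-1}L_{i,N_i-k-1}\in J$, leaving exactly the claimed form at level $k+1$.

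Taking $k=N_i-1$ in the claim gives $\phi(f_i)\equiv 0\pmod{J}$, so $\phi$ descends to a graded $\F$-algebra homomorphism $A\to\hat{A}$ as asserted. The main obstacle is bookkeeping: identifying the correct inductive invariant (the ``triangular'' form above), and verifying that at each stage every unwanted cross-term is annihilated by some $Z_{i,p}L_{i,p}$ already present as a factor. Once the two preparatory identities for $M_{i,0}$ and $M_{i,k}$ are in hand, the induction is essentially automatic.
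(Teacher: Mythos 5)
Your proof is correct and takes essentially the same approach as the paper: both multiply by $Z_{i,N_i}$, rewrite each factor $M_{i,k}$ in terms of the $L_{i,m}$'s and the tail variables $Z_{i,m}$, and then telescope the product using the relations $Z_{i,j}L_{i,j}\equiv 0$. Your formulation as a single induction with the explicit invariant $Z_{i,N_i}(Z_{i,1}+\cdots+Z_{i,N_i-k-1})L_{i,N_i-k}\cdots L_{i,N_i-1}$ is a somewhat cleaner packaging of the paper's two-step collapse (Equivalences (3.3)--(3.4) followed by the final annihilation), but the underlying argument is identical.
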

\begin{proof}
We need to show that for each $1\leq i\leq n$ we have the following relation in $\hat{A}$:
$$Z_{i,N_i}\prod_{k=0}^{N_i-1}\left(Z_{i,N_i}-\sum_{j\neq i}\lambda_{i,j}^kZ_{j,N_j}\right)\equiv 0.$$
Note that in $\hat{A}$ we have the relations
\begin{equation}
\label{eq:HARel1}
Z_{i,N_i}\cdot\left(Z_{i,N_i}-\sum_{j\neq i}\lambda_{i,j}^0Z_{j,N_j}\right)\equiv Z_{i,N_i}\cdot (Z_{i,1}+\cdots+Z_{i,N_{i}-1})
\end{equation}
and for $1\leq k\leq N_i-1$ we have 
\begin{equation}
\label{eq:HARel2}
Z_{i,N_i}\cdot\left(Z_{i,N_i}-\sum_{j\neq i}\lambda_{i,j}^kZ_{j,N_j}\right)\equiv Z_{i,N_i}\cdot\left(L_{i,N_i-k}+Z_{i,N_i-k+1}+\cdots+Z_{i,N_i-1}\right).
\end{equation}
Combining Equivalences \eqref{eq:HARel1} and \eqref{eq:HARel2} we see that 
\begin{equation}
\label{eq:HARel3}
Z_{i,N_i}\cdot\prod_{k=0}^{N_i-1}\left(Z_{i,N_i}-\sum_{j\neq i}\lambda_{i,j}^kZ_{j,N_j}\right)\equiv 
Z_{i,N_i}\left(Z_{i,1}+\cdots+Z_{i,N_i-1}\right)\prod_{k=1}^{N_i-1}\left(L_{i,N_i-k}+Z_{i,N_i-k+1}+\cdots+Z_{i,N_i-1}\right).
\end{equation}
Note that the product on the RHS of Equivalence \eqref{eq:HARel3} collapses, i.e.
\begin{equation}
\label{eq:HARel4}
\prod_{k=1}^{N_i-1}\left(L_{i,N_i-k}+Z_{i,N_i-k+1}+\cdots+Z_{i,N_i-1}\right)\equiv \prod_{k=1}^{N_i-1}L_{i,N_i-k}
\end{equation}
since $L_{i,N_i-1}$ is killed by $Z_{N_i-1}$ and inductively $L_{i,N_i-1}\cdots L_{i,N_i-k}$ is killed by $Z_{i,N_i-k}+\cdots+Z_{i,N_i-1}$.  Plugging Equivalence \eqref{eq:HARel4} into Equivalence \eqref{eq:HARel3} we finally get the relation
\begin{align*}
Z_{i,N_i}\cdot\prod_{k=0}^{N_i-1}\left(Z_{i,N_i}-\sum_{j\neq i}\lambda_{i,j}^kZ_{j,N_j}\right)\equiv & 
Z_{i,N_i}\left(Z_{i,1}+\cdots+Z_{i,N_i-1}\right)\prod_{k=1}^{N_i-1}L_{i,N_i-k}\\
\equiv &0
\end{align*}
where the last equivalence because $Z_{i,1}+\cdots+Z_{i,N_i-1}$ kills the product $\prod_{k=1}^{N_i-1}L_{i,N_i-k}$ in $\hat{A}$.  
\end{proof}

\begin{corollary}
\label{cor:HACI}
If $A$ is Artinian, then $\hat{A}$ is also Artinian.
\end{corollary}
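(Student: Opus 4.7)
The strategy is to view $\hat{A}$ as a module over $A$ through the graded $\F$-algebra map $\phi$ of Proposition~\ref{prop:WellDef} and apply graded Nakayama's lemma. Both $A$ and $\hat{A}$ are non-negatively graded with $\F$ in degree zero, and $\hat{A}$ is bounded below, so it will suffice to show that the Artinian reduction
\[
\hat{A}\big/\phi(A_+)\hat{A}\;=\;\hat{A}\big/\langle Z_{1,N_1},\ldots,Z_{n,N_n}\rangle
\]
is finite-dimensional over $\F$. Once established, graded Nakayama gives that $\hat{A}$ is a finitely generated $\phi(A)$-module, and since $\phi(A)$ is a quotient of the (by hypothesis) finite-dimensional ring $A$, this forces $\dim_\F\hat{A}<\infty$, whence $\hat{A}$ is Artinian by Lemma~\ref{lem:RegArt}.

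To identify this Artinian reduction, I would set each $Z_{j,N_j}=0$ in the defining relations of $\hat{A}$. The relation $Z_{i,N_i}L_{i,N_i}$ vanishes trivially, and for $1\le k\le N_i-1$ the tail $\sum_{j\ne i}(\lambda_{i,j}^{N_i-k}-\lambda_{i,j}^{0})Z_{j,N_j}$ appearing in $L_{i,k}$ dies, so the relation $Z_{i,k}L_{i,k}$ collapses to $Z_{i,k}(Z_{i,1}+\cdots+Z_{i,k})$. Since the surviving variables $\{Z_{i,k}:1\le k\le N_i-1\}$ split into disjoint blocks indexed by $i$, the Artinian reduction factors as a tensor product
\[
\hat{A}\big/\phi(A_+)\hat{A}\;\cong\;\bigotimes_{i=1}^n B_i,\qquad B_i:=\F[Z_{i,1},\ldots,Z_{i,N_i-1}]\Big/\bigl\langle Z_{i,k}(Z_{i,1}+\cdots+Z_{i,k}):1\le k\le N_i-1\bigr\rangle.
\]

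What remains is to verify that each $B_i$ is finite-dimensional, and this I would do via the grlex ordering of Section~2.2. Ordering variables within each block as $Z_{i,1}<\cdots<Z_{i,N_i-1}$, the leading term of $Z_{i,1}+\cdots+Z_{i,k}$ is $Z_{i,k}$, so the leading term of the $k$-th generator of $B_i$ is $Z_{i,k}^2$. Hence the initial ideal of the defining ideal of $B_i$ contains $\langle Z_{i,1}^2,\ldots,Z_{i,N_i-1}^2\rangle$, and the standard identity $\dim_\F R/I=\dim_\F R/\mathrm{in}(I)$ from Gr\"obner basis theory gives $\dim_\F B_i\le 2^{N_i-1}<\infty$. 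The main obstacle in carrying out this plan is the careful coefficient tracking needed in the middle step to see that every $L_{i,k}$ really does collapse to the partial sum $Z_{i,1}+\cdots+Z_{i,k}$ modulo $\phi(A_+)\hat A$; once that verification is complete, the tensor product is finite-dimensional and Nakayama closes the argument.
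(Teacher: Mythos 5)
Your argument is correct, but it takes a genuinely different route from the paper's. The paper works directly with the relations of $\hat{A}$: it rewrites each $Z_{i,j}^2$ (for $j<N_i$) as a combination of monomials of strictly smaller grlex order under the single global variable ordering $Z_{1,N_1}\prec\cdots\prec Z_{n,N_n}\prec Z_{1,1}\prec\cdots\prec Z_{n,N_n-1}$, deduces from Proposition~\ref{prop:Grlex} that every class in $\hat{A}$ is represented by a linear combination of monomials $Z_{1,N_1}^{A_1}\cdots Z_{n,N_n}^{A_n}\prod Z_{i,j}^{\epsilon_{i,j}}$ with $\epsilon_{i,j}\in\{0,1\}$, and then closes by using the nilpotence of $Z_{j,N_j}=\phi(x_j)$ inherited from the Artinian hypothesis on $A$. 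You instead interpret the same nilpotence structurally: view $\hat{A}$ as a graded $A$-module via $\phi$, invoke graded Nakayama to reduce the question to finite-dimensionality of the special fiber $\hat{A}/\phi(A_+)\hat{A}$, observe that after killing the $Z_{j,N_j}$ the relations decouple block-by-block into a tensor product of rings $B_i$, and bound $\dim_\F B_i$ by passing to the initial ideal. The two arguments rest on the same two ingredients -- the $Z_{j,N_j}$ are nilpotent because $A$ is Artinian, and a term order tames the remaining variables -- but the paper folds everything into one direct rewriting argument using only the tools it has set up (Proposition~\ref{prop:Grlex}), while you factor the problem through Nakayama, a tensor decomposition, and Macaulay's theorem on initial ideals. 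Your version is more modular and makes the fiber structure and the explicit bound $\dim_\F B_i\le 2^{N_i-1}$ visible, at the cost of importing machinery (graded Nakayama, Gr\"obner/initial-ideal theory) that the paper deliberately avoids.
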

\begin{proof}
Suppose that $A$ is Artinian.  Rewriting the defining relations for $\hat{A}$ we get that for each $1\leq i\leq n$
\begin{align*}
Z_{i,N_i}^2\equiv & Z_{i,N_i}\left(Z_{i,1}+\cdots+Z_{i,N_i-1}+\sum_{j\neq i}\lambda_{i,j}^0Z_{j,N_j}\right)\\
Z_{i,N_i-1}^2\equiv & Z_{i,N_{i}-1}\left(\sum_{j\neq i}\left(\lambda_{i,j}^1-\lambda_{i,j}^0\right)Z_{j,N_j}-\left(Z_{i,1}+\cdots+Z_{i,N_i-2}\right)\right)\\
\vdots &\\
Z_{i,N_i-k}^2\equiv & Z_{i,N_i-k}\left(\sum_{j\neq i}\left(\lambda_{i,j}^k-\lambda_{i,j}^0\right)Z_{j,N_j}-\left(Z_{i,1}+\cdots+Z_{i,N_i-k-1}\right)\right)\\
\vdots & \\
Z_{i,1}^2\equiv & Z_{i,1}\left(\sum_{j\neq i}\left(\lambda_{i,j}^{N_i-1}-\lambda_{i,j}^0\right)Z_{j,N_j}\right)
\end{align*}
Now endow the set of monomials in $\F[Z_{i,j}\left|1\leq i\leq n, \ \ 1\leq j\leq N_i\right.]$ with the grlex ordering stemming from the following ordering of the variables:
$$Z_{1,N_1}\prec\cdots\prec Z_{n,N_n}\prec Z_{1,1}\prec\cdots\prec Z_{1,N_1-1}\prec\cdots\prec Z_{n,1}\prec\cdots\prec Z_{n,N_n-1}.$$
Then we see that by using the above relations, we can express the squares of each of the variables, with the exception of $Z_{1,N_1}, \ldots,Z_{n,N_n}$, as monomials of strictly lesser order.  Therefore, by virtue of Proposition \ref{prop:Grlex}, every monomial in $\hat{A}$ is equivalent to an $\F$ linear combination of monomials of the form 
$$Z_{1,N_1}^{A_1}\cdots Z_{n,N_n}^{A_n}\prod_{i=1}^{n}\prod_{j=1}^{N_i-1}Z_{i,j}^{\epsilon_{i,j}}, \ \ A_i\in\N, \ \ \epsilon_{i,j}\in\left\{0,1\right\}.$$
Hence to show that $\hat{A}$ is Artinian, i.e. a finite dimensional $\F$ vector space, it suffices to show that the elements $Z_{j,N_j}$ are nilpotent in $\hat{A}$.  But $Z_{j,N_j}=\phi(x_j)$ and if $A$ is Artinian, $x_j$ is nilpotent for all $j$, and the result now follows.
\end{proof}  

Now we see that if $A$ is Artinian, then so is $\hat{A}$, and hence by Lemma \ref{lem:RegArt}, the sequence $Z_{i,j}\cdot L_{i,j}$, $1\leq i\leq n$, $1\leq j\leq N_i$ must be a regular sequence in $\F[Z_{i,j}\left| 1\leq i\leq n, \ 1\leq j\leq N_i\right.]$.  Hence in this case, Corollary \ref{cor:MEAbed} tells us that a socle generator for $\hat{A}$ is given by $\prod_{i=1}^n\prod_{j=1}^{N_i}L_{i,j}$.   
\begin{proposition}
\label{prop:Main}
If $A$ is Artinian, then so is $\hat{A}$ and the map $\phi\colon A\rightarrow \hat{A}$ defined above is an embedding.
\end{proposition}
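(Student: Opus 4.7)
My approach rests on identifying both $A$ and $\hat{A}$ as Artinian Gorenstein algebras with one-dimensional socles in the same degree $d := \sum_i N_i$. For $\hat{A}$ this is established in Corollaries \ref{cor:HACI} and \ref{cor:MEAbed}, the latter of which identifies $\Omega := \prod_{i,j} L_{i,j}$ as a socle generator. For $A$ this is the standard formula for complete intersections with generator degrees $N_i+1$. Consequently $\phi$ is an embedding if and only if the induced map $\phi|_{A_d}\colon A_d \to \hat{A}_d$ between one-dimensional spaces is nonzero, and it suffices to produce a single $\omega \in A_d$ with $\phi(\omega) \neq 0$ in $\hat{A}$.

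The natural candidate is $\omega := g_1 g_2 \cdots g_n$, where $g_i := \prod_{k=0}^{N_i-1}\bigl(x_i - \sum_{j\neq i}\lambda_{i,j}^k x_j\bigr) = f_i/x_i$. It lies in $A_d$, and $x_i\omega = f_i\cdot\prod_{j\neq i} g_j = 0$ in $A$ shows $\omega \in \operatorname{soc}(A)$. Rather than verify $\omega \neq 0$ in $A$ directly, I will argue that $\phi(\omega) \neq 0$ in $\hat{A}$; this automatically forces $\omega \neq 0$ so that $\omega$ is a socle generator of $A$, and since $\phi(\omega) \in \hat{A}_d = \F\cdot\Omega$ it is then a nonzero multiple of $\Omega$, making $\phi$ an embedding.

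The substantive task is to evaluate $\phi(\omega) = \prod_i\prod_{k=0}^{N_i-1}\bigl(Z_{i,N_i} - \sum_{j\neq i}\lambda_{i,j}^k Z_{j,N_j}\bigr)$ inside $\hat{A}$ and check nonvanishing. My plan is to simplify this product using the same relations and reductions that appear in the proof of Proposition \ref{prop:WellDef}: the identities \eqref{eq:HARel1} and \eqref{eq:HARel2}, together with the chain of annihilations $L_{i,N_i-1}\cdots L_{i,N_i-k}\cdot(Z_{i,N_i-k}+\cdots+Z_{i,N_i-1}) = 0$ used to derive \eqref{eq:HARel4}. Those reductions were previously deployed to witness $Z_{i,N_i}\cdot\phi(g_i)\equiv 0$; here I apply the same machinery to the bare product $\prod_i\phi(g_i)$, tracking the terms that survive. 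The goal is to reduce $\phi(\omega)$ into the squarefree-monomial basis furnished by Lemma \ref{lem:Abed}, and since $\hat{A}_d$ is one-dimensional the result must be a scalar multiple of $\Omega$.

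The main obstacle is showing this scalar does not vanish. In small cases one can verify it by hand -- for instance, when $n=2$ and $N_1 = N_2 = 2$, the scalar works out to $1 - \lambda_{1,2}^{1}\lambda_{2,1}^{1}$, which is nonzero precisely because Corollary \ref{cor:RegFact} forces the linear forms $x_1 - \lambda_{1,2}^{1}x_2$ and $x_2 - \lambda_{2,1}^{1}x_1$ to be linearly independent. I expect the general pattern to produce a polynomial in the $\lambda_{i,j}^k$'s built up from exactly the kind of pairwise regularity witnesses $1 - \lambda_{i,j}^a\lambda_{j,i}^b$ that Corollary \ref{cor:RegFact} makes available. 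Organizing the iterated reduction so that the final coefficient is \emph{manifestly} a product of such nonzero regularity factors is the technical heart of the argument, and where I expect the bulk of the work to lie.
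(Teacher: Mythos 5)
Your high-level skeleton matches the paper's: identify $\omega = g_1\cdots g_n$ (with $g_i = f_i/x_i$, i.e.\ the paper's $M_i$) as the socle element to push forward, note that both $A_d$ and $\hat{A}_d$ are one-dimensional with $d = \sum N_i$, invoke Corollary \ref{cor:MEAbed} for the socle generator $\Omega = \prod L_{i,j}$ of $\hat{A}$, and reduce the problem to checking $\phi(\omega)\neq 0$ using the same relations \eqref{eq:HARel1}--\eqref{eq:HARel4}. That is exactly how the paper begins.

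Where you diverge is precisely the step you defer as ``the technical heart,'' and that deferral is a genuine gap. You expect the scalar $c$ with $\phi(\omega) = c\cdot\Omega$ to be a complicated polynomial in the $\lambda_{i,j}^k$'s, something like a product of pairwise regularity witnesses $1-\lambda_{i,j}^a\lambda_{j,i}^b$, and you propose to prove $c\neq 0$ from Corollary \ref{cor:RegFact}. But this conflates two different bases: when you reduce $\phi(\omega)$ into the squarefree $Z$-monomial basis of Lemma \ref{lem:Abed}, the leading coefficient on $\prod Z_{i,j}$ is indeed such a polynomial -- but so is the leading coefficient of $\Omega$ itself, because $\Omega$ is a product of the linear forms $L_{i,j}$, not of the $Z_{i,j}$. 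The ratio, which is what actually matters for the embedding, is clean: the paper proves $\phi(\omega) \equiv (-1)^n\,\Omega$, so $c = (-1)^n$, which is manifestly a unit with no case analysis. The missing ingredient is the reduction the paper records in \eqref{eq:Soc4}: each $M_i$ satisfies $M_i \equiv -L_i + E_i$ where $E_i$ is a multiple of $\sum_{j\neq i}\lambda_{i,j}^0 Z_{j,N_j}$, and the error $E_i$ is annihilated by the surrounding factors (since $Z_{j,N_j}\cdot M_j \equiv 0$ by Proposition \ref{prop:WellDef} and $Z_{j,N_j}\cdot L_j \equiv 0$ from the defining relation $Z_{j,N_j}L_{j,N_j}=0$). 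This observation is what makes the induction $M_1\cdots M_n \equiv (-1)^i L_1\cdots L_i\,M_{i+1}\cdots M_n$ collapse term by term. Without it, you are left with a direct expansion whose nonvanishing would require controlling a polynomial in the $\lambda$'s for arbitrary $n$ and $N_i$ -- a substantially harder and, as you acknowledge, incomplete task. To repair the proposal, replace the ``organize the iterated reduction'' plan with the explicit identity $M_i \equiv -L_i + (\text{terms killed by }L_1\cdots L_{i-1}M_{i+1}\cdots M_n)$ and run the induction; the desired nonvanishing then follows with no regularity-witness bookkeeping at all.
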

\begin{proof}
By the preceding discussion, we need only show that $\prod_{i=1}^n\prod_{j=1}^{N_i}L_{i,j}$ is in the image of $\phi$.  The claim is that we have the following equivalence:
$$\prod_{i=1}^n\prod_{k=0}^{N_i-1}\left(Z_{i,N_i}-\sum_{j\neq i}\lambda_{i,j}^kZ_{j,N_j}\right)\equiv C\cdot \prod_{i=1}^n\prod_{j=1}^{N_i}L_{i,j} \ \ \text{for some $C\in\F^\times$}.$$
Then since the LHS is clearly in the image of $\phi$, this will prove the desired result.  For a shorthand notation, we write 
$$M_i\coloneqq \prod_{k=0}^{N_i-1}\left(Z_{i,N_i}-\sum_{j\neq i}\lambda_{i,j}^kZ_{j,N_j}\right)$$
and we shall write 
$$L_i\coloneqq \prod_{k=0}^{N_i-1}L_{i,N_i-k}.$$
In this notation we want to show that in $\hat{A}$ we have the relation
$$\prod_{i=1}^{n}M_i\equiv C\cdot \prod_{i=1}^nL_i \ \ \text{for some $C\in\F^\times$}.$$
In fact we will show that for each $1\leq i\leq n$ we always have the equivalence in $\hat{A}$ 
$$M_1\cdots M_n\equiv (-1)^i\cdot L_1\cdots L_i\cdot M_{i+1}\cdots M_n.$$
We prove this by induction on $i$.  The base case $i=1$ asserts that we have the equivalence in $\hat{A}$
$$M_1\cdots M_n\equiv -L_1\cdot M_2\cdots M_n.$$
To see this, we first recall some computations performed previously.  To start recall that we had for each $1\leq i\leq n$ and for each $1\leq k\leq N_i-1$ we have
\begin{equation}
\label{eq:Soc1}
Z_{i,N_i}\cdot(Z_{i,N_i}-\sum_{j\neq i}\lambda_{i,j}^kZ_{j,N_j})\equiv Z_{i,N_i}\cdot(L_{i,N_i-k}+Z_{i,N_i-k+1}+\cdot+Z_{i,N_i-1}).
\end{equation}
Also since the sum $Z_{i,N_i-j}+\cdots+Z_{i,N_i-1}$ annihilates the product $\prod_{k=1}^jL_{i,N_i-k}$, the entire product collapses, i.e.
\begin{equation}
\label{eq:Soc2}
\prod_{k=1}^{N_i-1}(L_{i,N_i-k}+Z_{i,N_i-k+1}+\cdots+Z_{i,N_i-1})\equiv \prod_{k=1}^{N_i-1}L_{i,N_i-k},
\end{equation}
for each $1\leq i\leq n$.  Finally we recall the equation
\begin{equation}
\label{eq:Soc3}
Z_{i,N_i}=Z_{i,1}+\cdots+Z_{i,N_i-1}-L_{i,N_i}+\sum_{j\neq i}\lambda^0_{i,j}Z_{j,N_j}.
\end{equation}
Now combining Equivalences \eqref{eq:Soc1}, \eqref{eq:Soc2} and Equation \eqref{eq:Soc3}, we obtain the following equivalence:
\begin{align}
\label{eq:Soc4}
\nonumber M_i= & \left(Z_{i,N_i}-\sum_{j\neq i}\lambda_{i,j}^0Z_{j,N_j}\right)\prod_{k=1}^{N_i-1}\left(Z_{i,N_i}-\sum_{j\neq i}\lambda_{i,j}^k\Z_{j,N_j}\right)\\
\nonumber= & Z_{i,N_i}\prod_{k=1}^{N_i-1}\left(Z_{i,N_i}-\sum_{j\neq i}\lambda_{i,j}^kZ_{i,j}^k\right)\\
\nonumber&-\left(\sum_{j\neq i}\lambda_{i,j}^kZ_{j,N_j}\right)\prod_{k=1}^{N_i-1}\left(Z_i-\sum_{j\neq i}\lambda_{i,j}^kZ_{j,N_j}\right)\\
\nonumber\equiv & Z_{i,N_i}\prod_{k=1}^{N_i-1}\left(L_{i,N_i-k}+Z_{i,N_i-k+1}+\cdots+Z_{i,N_i}\right)\\
\nonumber&-\left(\sum_{j\neq i}\lambda_{i,j}^kZ_{j,N_j}\right)\prod_{k=1}^{N_i-1}\left(Z_i-\sum_{j\neq i}\lambda_{i,j}^kZ_{j,N_j}\right)\\
\nonumber\equiv & Z_{i,N_i}\prod_{k=1}^{N_i-1}L_{i,N_i-k}\\
\nonumber&-\left(\sum_{j\neq i}\lambda_{i,j}^kZ_{j,N_j}\right)\prod_{k=1}^{N_i-1}\left(Z_i-\sum_{j\neq i}\lambda_{i,j}^kZ_{j,N_j}\right)\\
\nonumber\equiv & \left(Z_{i,1}+\cdots+Z_{i,N_i-1}-L_{i,N_i}+\sum_{j\neq i}\lambda_{i,j}^0Z_{j,N_j}\right)\prod_{k=1}^{N_i-1}L_{i,N_i-k}\\
\nonumber&-\left(\sum_{j\neq i}\lambda_{i,j}^kZ_{j,N_j}\right)\prod_{k=1}^{N_i-1}\left(Z_{i,N_i}-\sum_{j\neq i}\lambda_{i,j}^kZ_{j,N_j}\right)\\
\nonumber\equiv & -\prod_{k=0}^{N_i-1}L_{i,N_i-k}\\
\nonumber&-\left(\sum_{j\neq i}\lambda_{i,j}^kZ_{j,N_j}\right)\left(\prod_{k=1}^{N_i-1}\left(Z_{i,N_i}-\sum_{j\neq i}\lambda_{i,j}^kZ_{j,N_j}\right)-\prod_{k=1}^{N_i-1}L_{i,N_i-k}\right)\\
M_i\equiv  & -L_i-\left(\sum_{j\neq i}\lambda_{i,j}^kZ_{j,N_j}\right)\left(\prod_{k=1}^{N_i-1}\left(Z_{i,N_i}-\sum_{j\neq i}\lambda_{i,j}^kZ_{j,N_j}\right)-\prod_{k=1}^{N_i-1}L_{i,N_i-k}\right)
\end{align}
Plugging Equivalence \eqref{eq:Soc4} into the product $M_1\cdots M_n$ we get
$$\left(-L_1-\left(\sum_{j\neq 1}\lambda_{1,j}^kZ_{j,N_j}\right)\left(\prod_{k=1}^{N_1-1}\left(Z_{1,N_1}-\sum_{j\neq 1}\lambda_{i,j}^kZ_{j,N_j}\right)-\prod_{k=1}^{N_i-1}L_{i,N_i-k}\right)\right)\cdot M_2\cdots M_n.$$
But since $Z_{j,N_j}$ annihilates $M_2\cdots M_n$ for all $j\neq 1$, we get the equivalence
$$M_1\cdots M_n\equiv -L_1\cdot M_2\cdots M_n$$
which establishes the base case.  Inductively assume that we have the equivalence
$$M_1\cdots M_n\equiv (-1)^{i-1}\cdot L_1\cdots L_{i-1}\cdot M_{i}\cdots M_n$$ 
for some $i>1$.  Now we substitute Equivalence \eqref{eq:Soc4} into $L_1\cdots L_{i-1}\cdot M_i\cdots M_{n}$ to get
$$(-1)^{i-1}L_1\cdots L_{i-1}\left(-L_i-\left(\sum_{j\neq i}\lambda_{i,j}^kZ_{j,N_j}\right)\left(\prod_{k=1}^{N_i-1}\left(Z_{i,N_i}-\sum_{j\neq i}\lambda_{i,j}^kZ_{j,N_j}\right)-\prod_{k=1}^{N_i-1}L_{i,N_i-k}\right)\right)\cdot M_{i+1}\cdots M_n.$$
Note again that for each $j\neq i$, $Z_{j,N_j}$ annihilates the product $L_1\cdots L_{i-1}\cdot M_{i+1}\cdots M_n$, and so we again obtain
$$M_1\cdots M_n\equiv -(-1)^{i-1}L_1\cdots L_i\cdot M_{i+1}\cdots M_n$$
which completes the induction and proves the claim.  Therefore we have shown that in $\hat{A}$ we have the equivalence
$$M_1\cdots M_n\equiv (-1)^nL_1\cdots L_n$$
and therefore $\phi\colon A\rightarrow\hat{A}$ must be an embedding, as desired.
\end{proof}

\bibliographystyle{plain}
\bibliography{Smith}

\end{document}